 \newtheorem{theo}{\small\bf Theorem}
 \newtheorem{lem}{\small\bf Lemma}[section]
 \newtheorem{rem}{\small\bf Remark}[section]
 \newenvironment{REM}{\begin{rem} \rm}{\end{rem}}
 \newtheorem{defi}{\small\bf Definition}[section]
 \newtheorem{cor}{\small\bf Corollary}
 \newcommand{\be}{\begin{equation}}
 \newcommand{\ee}{\end{equation}}
 \newcommand{\E}{\operatorname{\mathds{E}}}
 \renewcommand{\Pr}{\operatorname{\mathds{P}}}
 \newcommand{\Var}{\operatorname{Var}}
 \newcommand{\bbb}[1]{\textrm{\boldmath $ #1 $}}
 \newcommand{\RR}{\mathds{R}}
 \newcommand{\CC}{\mathds{C}}
 \newcommand{\ud}{{\rm d}}
\renewcommand\@biblabel[1]{#1.}
 \newcommand{\Bin}{\Big(\hspace{-.6ex}
 \begin{array}{c} n \\ k \end{array}
 \hspace{-.6ex}\Big)}
 \newcommand{\BinB}{\Big(\hspace{-.6ex}
 \begin{array}{c} n \\ k-1 \end{array}
 \hspace{-.6ex}\Big)}
 \newcommand{\BinC}{\Big(\hspace{-.6ex}
 \begin{array}{c} n+1 \\ k \end{array}
 \hspace{-.6ex}\Big)}
 \title{ \Large\bf Order statistics from exchangeable
 random variables are always sufficient}
 \author{\large\bf
  Nickos Papadatos
  \vspace*{-3ex}
  }
  \date{\normalsize
  Department of Mathematics, National and Kapodistrian
  University of Athens,
  Panepistemiopolis, 157 84 Athens, Greece.
  }
\begin{document}

 \maketitle

 \thispagestyle{empty}

 \begin{abstract}
 \noindent
 Let $(X_1,\ldots,X_n)$ be an exchangeable random vector with
 distribution function $F$, and
 denote by $Y_1\leq \cdots\leq Y_n$ the corresponding order statistics.
 We show that the conditional distribution of $(X_1,\ldots,X_n)$
 given $(Y_1,\ldots,Y_n)$ does not depend on $F$.
 \end{abstract}
 {\footnotesize
 {\it MSC}:  Primary 62B05; Secondary 62G30.
 \newline
 {\it Key words and phrases}: Sufficiency;
 Exchangeable Random Variables; Symmetric Borel $\sigma$-field;
 \vspace{-.3em} 
 Order Statistics; Fisher-Neyman Factorization Criterion.}
 \vspace{1.2em}

 
 For a Borel set $B\in{\cal B}(\RR^n)$ we set
 $B^{+}:=\cup_{\bbb \pi} \{(x_{\pi(1)},\ldots,x_{\pi(n)}): \ {\bbb x}
 :=(x_1,\ldots,x_n)\in B\}$, where the union is taken over the $n!$ permutations
 $\bbb{\pi}:=(\pi(1),\ldots,\pi(n))$ of $\{1,\ldots,n\}$.
 Define the conic Borel set $\Gamma_0:=\{{\bbb x}\in \RR^n: x_1\leq \cdots\leq x_n\}$.
 If ${\bbb X}:=(X_1,\ldots X_n)$ is a random vector with order statistics 
 $Y_1\leq \cdots\leq Y_n$
 then, setting  ${\bbb Y}:=(Y_1,\ldots Y_n)$, it is readily
 checked that ${\bbb Y}\in B$ if and only if (iff) ${\bbb Y}\in B\cap \Gamma_0$
 iff ${\bbb X}\in (B\cap \Gamma_0)^+$ for any Borel subset $B$ of $\RR^n$.
 This verifies that ${\cal B}^+:=\{(B\cap \Gamma_0)^+: B\in{\cal B}(R^n)\}$
 is a $\sigma$-field in $\RR^n$ which we shall call 
 {\it the symmetric $\sigma$-field}.
 A Borel subset $B$ of $\RR^n$ is called {\it symmetric} if $\bbb{x}\in B$
 implies $(x_{\pi(1)},\ldots,x_{\pi(n)})\in B$ for any permutation
 $\bbb \pi$. Under this definition, 
 ${\cal B}^+$ is the smallest $\sigma$-field that contains the symmetric
 Borel subsets of $\RR^n$. 
 
 Using the preceding terminology we have the following

 \begin{theo}
   \label{theo.1}
   If $X_1,\ldots,X_n$ are exchangeable random variables and 
   $g:\RR^n\to\RR$ is Borel measurable with $\E |g({\bbb X})|<\infty$
   then
   \[
   \E[g({\bbb X}) \ | \ {\bbb Y}] =\frac{1}{n!} \sum_{{\bbb \pi}}
   g(Y_{\pi(1)},\ldots,Y_{\pi(n)}), \ \ \mbox{\rm a.s.}
   \]
 \end{theo}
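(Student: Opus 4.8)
The plan is to verify directly that the right-hand side is a version of $\E[g(\bbb X)\mid\bbb Y]$, using nothing but the definition of conditional expectation. Introduce the symmetrization $h(\bbb x):=\frac{1}{n!}\sum_{\bbb\pi}g(x_{\pi(1)},\ldots,x_{\pi(n)})$ of $g$. Each coordinate permutation $\bbb x\mapsto(x_{\pi(1)},\ldots,x_{\pi(n)})$ is a linear homeomorphism of $\RR^n$, so $h$ is Borel measurable; it is manifestly symmetric; and since $\bbb Y$ and $\bbb X$ have, pointwise, the same multiset of coordinates, symmetry of $h$ gives $h(\bbb Y)=h(\bbb X)$. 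Thus the right-hand side of the theorem is precisely $h(\bbb Y)$, a Borel function of $\bbb Y$ and hence $\sigma(\bbb Y)$-measurable. Its integrability is immediate from exchangeability: $(X_{\pi(1)},\ldots,X_{\pi(n)})\stackrel{d}{=}\bbb X$ for every $\bbb\pi$, so $\E|h(\bbb Y)|=\E|h(\bbb X)|\le\frac{1}{n!}\sum_{\bbb\pi}\E|g(X_{\pi(1)},\ldots,X_{\pi(n)})|=\E|g(\bbb X)|<\infty$.

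It then remains to check $\E[g(\bbb X)\one_A]=\E[h(\bbb Y)\one_A]$ for every $A\in\sigma(\bbb Y)$. By the discussion preceding the theorem, each such $A$ equals $\{\bbb X\in C\}$ for some $C\in{\cal B}^+$, and every member of ${\cal B}^+$ is a symmetric Borel set (indeed $(B\cap\Gamma_0)^+$ is invariant under coordinate permutations by construction), so $\one_C(x_{\pi(1)},\ldots,x_{\pi(n)})=\one_C(\bbb x)$ for all $\bbb x$ and $\bbb\pi$. Fix such a $C$ and a permutation $\bbb\pi$. Applying the distributional identity $(X_{\pi(1)},\ldots,X_{\pi(n)})\stackrel{d}{=}\bbb X$ to the integrable Borel function $\bbb x\mapsto g(\bbb x)\one_C(\bbb x)$, and then using the symmetry of $C$, we obtain $\E[g(\bbb X)\one_C(\bbb X)]=\E[g(X_{\pi(1)},\ldots,X_{\pi(n)})\one_C(X_{\pi(1)},\ldots,X_{\pi(n)})]=\E[g(X_{\pi(1)},\ldots,X_{\pi(n)})\one_C(\bbb X)]$. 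Averaging over the $n!$ permutations $\bbb\pi$ gives $\E[g(\bbb X)\one_C(\bbb X)]=\E[h(\bbb X)\one_C(\bbb X)]=\E[h(\bbb Y)\one_A]$, which is exactly the required identity. Together with the measurability and integrability of $h(\bbb Y)$ noted above, the defining property of conditional expectation forces $\E[g(\bbb X)\mid\bbb Y]=h(\bbb Y)$ a.s.

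I expect no serious obstacle; the care needed is entirely in the bookkeeping. The point most easily mishandled is the use of exchangeability to replace $\bbb X$ by $(X_{\pi(1)},\ldots,X_{\pi(n)})$ inside an expectation: this is an equality of pushforward laws, valid for any fixed Borel integrand with finite first moment, and it is the symmetry of the set $C$ that then lets one peel the permutation off the indicator afterwards. One should also be explicit that $\sigma(\bbb Y)$ coincides with the family of events $\{\bbb X\in C\}$, $C\in{\cal B}^+$ — which is exactly what the preamble established — and that $h$ is simultaneously Borel and symmetric, so that $h(\bbb Y)=h(\bbb X)$. No truncation is required, since the computation goes through verbatim for any $g\in L^1$; alternatively one could first treat bounded $g$ and then pass to the limit by dominated convergence.
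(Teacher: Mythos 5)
Your proof is correct and follows essentially the same route as the paper's: you characterize $\sigma(\bbb Y)$ as the events $\{\bbb X\in C\}$ with $C$ a symmetric Borel set, identify $\frac{1}{n!}\sum_{\bbb\pi}g(Y_{\pi(1)},\ldots,Y_{\pi(n)})$ with the symmetrization $h(\bbb X)$, and then use exchangeability together with the symmetry of $C$ to peel the permutation off the indicator --- exactly the paper's argument. No issues.
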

 \begin{proof}
 For every ${\bbb x}\in\RR^n$ we fix a permutation ${\bbb \pi}={\bbb \pi}_{\bbb x}$ 
 such that
 $x_{\pi(1)}\leq \cdots\leq x_{\pi(n)}$.
 Define ${\bbb f}({\bbb x})=(f_1({\bbb x}),\ldots,f_n({\bbb x})):\RR^n\to\RR^n$ where 
 $f_1({\bbb x})=y_1,\ldots,f_n({\bbb x})=y_n$, with 
 $y_i=x_{\pi(i)}$, $i=1,\ldots,n$. The mapping $\bbb f:\RR^n\to \RR^n$
 is continuous, hence Borel. Then, the $\sigma$-field generated by $\bbb Y$,
 $\sigma ({\bbb Y})$, equals to 
 \[
 \sigma ({\bbb Y})=({\bbb f}\circ {\bbb X})^{-1}
 ({\cal B}(\RR^n))
 =
 {\bbb X}^{-1}\Big({\bbb f}^{-1}({\cal B}(\RR^n)) \Big)
 ={\bbb X}^{-1} ({\cal B}^+).
 \]
 Let $(\Omega,{\cal A},\Pr)$ be the probability space where
 the random vector $\bbb X$ is defined. Clearly, 
 $A\in \sigma({\bbb Y})$ iff $A=\{\omega:{\bbb Y}(\omega)\in B\}
 =\{\omega:{\bbb X}(\omega)\in (B\cap \Gamma_0)^+\}$ for some
 $B\in {\cal B}(\RR^n)$. Thus, we may write
 $I_A(\omega)=I_{(B\cap\Gamma_0)^+}({\bbb X}(\omega))$, where
 $I$ denotes an indicator function. The result
 will be proved if we verify the identity
 \be
 \label{1}
 \int_A g({\bbb X}) \ \ud\Pr = \frac{1}{n!} \sum_{\bbb \pi} 
 \int_A g(Y_{\pi(1)},\ldots,Y_{\pi(n)}) \ \ud \Pr, 
 \ \ \ \ A\in\sigma({\bbb{Y}}).
 \ee
 Obviously,
 \be
 \label{2}
 \frac{1}{n!} \sum_{\bbb \pi}
 g(Y_{\pi(1)},\ldots,Y_{\pi(n)})=
 \frac{1}{n!} \sum_{\bbb \pi}
 g(X_{\pi(1)},\ldots,X_{\pi(n)}),
 \ee
 because these sums contain the same summands. Since $I_A=I_{(B\cap \Gamma_0)^+}(\bbb X)$,
 we can rewrite the left-hand side of (\ref{1}) as
 \[
 \int I_A \ g({\bbb X}) \ \ud\Pr =
 \int I_{(B\cap \Gamma_0)^+}(\bbb X) \ g({\bbb X}) \ \ud\Pr
 =\E h(\bbb X),
 \]
 where $h(\bbb x):=I_{(B\cap \Gamma_0)^+} (\bbb x) g(\bbb x)$.
 From (\ref{2}), the right-hand side of (\ref{1}) equals to 
 \[
 \frac{1}{n!} \sum_{\bbb \pi} \int I_{(B\cap \Gamma_0)^+} (\bbb X) \
 g(X_{\pi(1)},\ldots,X_{\pi(n)}) \ \ud \Pr.
 \]
 Since the set $(B\cap \Gamma_0)^+$ is symmetric,
 it holds $I_{(B\cap \Gamma_0)^+} (\bbb X)
 =I_{(B\cap \Gamma_0)^+} (X_{\pi(1)},\ldots,X_{\pi(n)})$ for every $\bbb \pi$.
 Using exchangeability we obtain
 \[
 \frac{1}{n!} \sum_{\bbb \pi} \int I_{(B\cap \Gamma_0)^+} (\bbb X) \
 g(X_{\pi(1)},\ldots,X_{\pi(n)}) \ \ud \Pr = 
 \frac{1}{n!} \sum_{\bbb \pi} \E h(X_{\pi(1)},\ldots,X_{\pi(n)})=\E h(\bbb X),
 \]
 and the proof is complete.
 \end{proof}

 \begin{cor}
 \label{cor.1}
 The conditional distribution of ${\bbb X}$ given $\bbb Y$ is given by
 \[
 \Pr({\bbb X}\in B \ | \ \bbb Y)=\frac{1}{n!} \sum_{\bbb \pi} 
 I_B(Y_{\pi(1)},\ldots,Y_{\pi(n)}), 
 \ \ \ \  B\in{{\cal B}(\RR^n)}.
 \]
 \end{cor}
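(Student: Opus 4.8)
The plan is to apply Theorem \ref{theo.1} with the bounded Borel function $g=I_B$. For any fixed $B\in\mathcal{B}(\RR^n)$ one has $\E|I_B(\bbb X)|=\Pr(\bbb X\in B)\leq 1<\infty$, so the theorem applies and yields
\[
\Pr(\bbb X\in B\mid \bbb Y)=\E[I_B(\bbb X)\mid\bbb Y]=\frac{1}{n!}\sum_{\bbb\pi} I_B(Y_{\pi(1)},\ldots,Y_{\pi(n)}),\qquad\text{a.s.},
\]
which is the claimed identity, understood (as usual for conditional probabilities) up to a $\Pr$-null set possibly depending on $B$.

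To read this as a statement about the conditional \emph{distribution} — i.e.\ to exhibit a regular conditional distribution — I would observe that the right-hand side already \emph{is} a Markov kernel. Setting, for $\omega\in\Omega$ and $B\in\mathcal{B}(\RR^n)$,
\[
Q(\omega,B):=\frac{1}{n!}\sum_{\bbb\pi} I_B\bigl(Y_{\pi(1)}(\omega),\ldots,Y_{\pi(n)}(\omega)\bigr)
=\frac{1}{n!}\sum_{\bbb\pi}\delta_{(Y_{\pi(1)}(\omega),\ldots,Y_{\pi(n)}(\omega))}(B),
\]
one checks that for each fixed $\omega$ the map $B\mapsto Q(\omega,B)$ is a convex combination of Dirac measures, hence a probability measure on $\mathcal{B}(\RR^n)$; and for each fixed $B$ the map $\omega\mapsto Q(\omega,B)$ is a Borel function of $\bbb Y$, hence $\sigma(\bbb Y)$-measurable. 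Thus $Q$ is a $\sigma(\bbb Y)$-measurable Markov kernel, of the form $Q(\omega,B)=q(\bbb Y(\omega),B)$ for a kernel $q$ on $\RR^n$.

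It then remains to verify the defining property of a conditional distribution, namely that for every $A\in\sigma(\bbb Y)$ and every $B\in\mathcal{B}(\RR^n)$,
\[
\int_A Q(\cdot,B)\,\ud\Pr=\Pr\bigl(A\cap\{\bbb X\in B\}\bigr)=\int_A I_B(\bbb X)\,\ud\Pr .
\]
But this is exactly identity (\ref{1}) from the proof of Theorem \ref{theo.1} specialized to $g=I_B$ (equivalently, it is the integrated form of the displayed a.s.\ identity above). Hence $Q$ is a regular conditional distribution of $\bbb X$ given $\bbb Y$, which proves the corollary. The only step needing a word of care is the passage from the ``for each $B$, a.s.'' identity to a single kernel valid simultaneously for all $B$; here this is effortless precisely because the candidate formula is visibly a probability measure in $B$ for every $\omega$, so no measurable selection among versions is required.
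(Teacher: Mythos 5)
Your proof is correct and takes the same route as the paper, which simply applies Theorem \ref{theo.1} with $g=I_B$. Your additional observation that the right-hand side is visibly a Markov kernel in $(\omega,B)$, so that the ``a.s.\ for each $B$'' statement upgrades to a genuine regular conditional distribution without any measurable-selection issue, is a worthwhile extra point of rigor that the paper leaves implicit.
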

 \begin{proof}
 Apply Theorem \ref{theo.1} with $g=I_B$.
 \end{proof}
 
 Corollary \ref{cor.1} verifies that the order statistics are {\it sufficient}
 within the entire family,  ${\cal F}$, 
 of exchangeable probability measures on $\RR^n$. This result cannot be
 proved by means of the well-known Fisher-Neyman factorization criterion,
 since there is no $\sigma$-finite  measure dominating ${\cal F}$; see
 Shao (2003), Lemma 2.1, Theorem 2.2 and Exercise 31. 
 Moreover, let
 ${\cal F}'\subset {\cal F}$ be the family 
 of $n$-variate probability measures
 of independent, identically distributed, random variables $X_1,\ldots,X_n$.
 Clearly, ${\cal F}'$ contains the Dirac measures $\delta_{(x,\ldots,x)}$, $x\in\RR$,
 and therefore, there is no $\sigma$-finite measure dominating ${\cal F}'$, too.


\begin{thebibliography}{1}
 \bibitem{Shao}
 Shao, J.\ (2003). 
 {\it Mathematical Statistics}, 2nd ed., Springer, N.Y.
 \end{thebibliography}
 \end{document}